\begin{document}

\newtheorem{theorem}{Theorem}[section]
\newtheorem{lemma}[theorem]{Lemma}
\newtheorem{corollary}[theorem]{Corollary}
\newtheorem{conjecture}[theorem]{Conjecture}
\newtheorem{proposition}[theorem]{Proposition}
\newtheorem{problem}[theorem]{Problem}
\newtheorem*{main_thm}{Main Theorem~\ref{thm:main_theorem}}
\newtheorem*{claim}{Claim}
\newtheorem*{criterion}{Criterion}
\theoremstyle{definition}
\newtheorem{question}[theorem]{Question}
\newtheorem{definition}[theorem]{Definition}
\newtheorem{construction}[theorem]{Construction}
\newtheorem{notation}[theorem]{Notation}
\newtheorem{convention}[theorem]{Convention}
\newtheorem*{warning}{Warning}

\theoremstyle{remark}
\newtheorem{remark}[theorem]{Remark}
\newtheorem{example}[theorem]{Example}
\newtheorem*{case}{Case}

\def\Z{{\mathbb Z}}
\def\N{{\mathbb N}}
\def\R{{\mathbb R}}
\def\Q{{\mathbb Q}}
\def\C{{\mathbb C}}
\def\D{{\mathbb D}}
\def\DD{{\mathcal D}}
\def\H{{\mathbb H}}
\def\L{{\mathcal L}}
\def\M{{\mathcal M}}
\def\T{{\mathcal T}}
\def\E{{\mathcal E}}
\def\X{{\mathcal X}}
\def\SetA{{\mathcal M}}
\def\SetB{{\mathcal M_0}}
\def\SetC{{\mathcal M_1}}
\def\SetAA{{\overline{\mathcal M}}}
\def\SetBB{{\overline{\mathcal M}_0}}
\def\SetCC{{\overline{\mathcal M}_1}}
\def\fz{f_z}
\def\gz{g_z}
\def\Lz{\Lambda_z}
\def\Gz{G_z}
\def\P{{\mathcal P}}

\def\tdLz{L_z}
\def\tdL{L}
\def\diam{\textnormal{diam}}

\newcommand\numberthis{\addtocounter{equation}{1}\tag{\theequation}}
\newcommand{\marginal}[1]{\marginpar{\tiny #1}}

\title{Extreme points in limit sets}

\author{Danny Calegari}
\address{Department of Mathematics \\ University of Chicago \\
Chicago, IL, 60637}
\email{dannyc@math.uchicago.edu}

\author{Alden Walker}
\address{Center for Communications Research \\ San Diego, CA 92121}
\email{akwalke@ccrwest.org}

\date{\today}

\begin{abstract}
Given an iterated function system of affine dilations with
fixed points the vertices of
a regular polygon, we characterize which points in the limit
set lie on the boundary of its convex hull.
\end{abstract}

\maketitle

\setcounter{tocdepth}{1}
\tableofcontents

\section{Introduction}
\label{sec:intro}

\subsection{Background}

Fix $n \ge 2$ and $c \in \C$ with $|c| < 1$.  Let $F_{n,c}$ be the
iterated function system generated by $\{ f_j \}_{j=0}^{n-1}$ with
$f_j : \C \to \C$ defined
\[
f_j(z) = cz + \xi^j,
\]
where $\xi = e^{2\pi i / n}$ is the standard primitive $n$th root of unity.
Let $\Xi_n = \{\xi^j\}_{j=0}^{n-1}$ be the set of all $n$th roots of unity.
The iterated function system $F_{n,c}$, and thus its limit set
$\Lambda_{n,c}$, has rotational symmetry of order $n$ around $0$.
Indeed, it is simple to check that conjugation by the
multiplication-by-$\xi$ map takes $f_j$ to $f_{j+1 \bmod n}$.
Other definitions of iterated function systems with fixed points on
the vertices of a regular $n$-gon are conjugate to $F_{n,c}$;
we have chosen this one to simplify our arguments.
We will only be interested in $n\ge 2$.  This construction is a
simple generalization of the one for $n=2$ initially studied
by~\cite{Barnsley_Harrington}.

Our objective in this note is to characterize the \emph{extreme} points in
$\Lambda_{n,c}$; that is, those which lie on the boundary of its convex hull.
As a consequence, we provide an updated, more thorough version 
of Lemma~7.2.3 in~\cite{Calegari_Koch_Walker}, and we
re-prove~\cite{Himeki}, Proposition~2.1. The material here is
an extracted, expanded piece of~\cite{circle}.

\subsection{Acknowledgements}

Danny Calegari was supported by NSF grant DMS 1405466. 
Alden Walker was partially supported by NSF grant DMS 1203888.

\section{The limit set}
\label{sec:limit_set}

For convenience, we will use $F_{n,c}$ to denote the set of all finite
words in the symbols $\{0,\ldots, n-1\}$, which will be a notational
convenience obviously in bijection with the set of finite
words in the generators $f_j$.  Note that the set of finite words
in the $f_j$ is slightly different from the set of
finite compositions of the $f_j$ because different words may produce
the same function.  We define a map
$\pi:F_{n,c}\times \C \to \C$ by 
\[
\pi( (j_0, j_1, \cdots j_m), z) = (f_{j_0} \circ \cdots \circ f_{j_m})(z).
\]
Let $F_{n,c}^\infty$ denote the set of infinite words
in $\{0,\ldots, n-1\}$.
Given an infinite word $f = (j_0, j_1, \cdots) \in F_{n,c}^\infty$,
the limit $\lim_{m\to\infty} (f_{j_0} \circ \cdots \circ f_{j_m})(z)$ does
not depend on $z$ because $|c| < 1$.  Thus we can extend $\pi$ to
a map $\pi:F_{n,c}^\infty \to \C$.

\begin{lemma}\label{lem:param}
$\pi(F_{n,c}^\infty) = \Lambda_{n,c}$.
\end{lemma}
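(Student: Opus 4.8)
The plan is to identify $\Lambda_{n,c}$ with the attractor of the iterated function system, namely the unique nonempty compact set $K\subseteq\C$ satisfying the self-similarity relation $K=\bigcup_{j=0}^{n-1}f_j(K)$, whose existence and uniqueness come from Hutchinson's fixed-point theorem for the contraction $A\mapsto\bigcup_j f_j(A)$ on the space of compact subsets of $\C$ equipped with the Hausdorff metric. Granting this, it suffices to verify that $\pi(F_{n,c}^\infty)$ is nonempty, compact, and invariant under this operator; uniqueness then forces equality. The first preliminary step is to produce a forward-invariant disk: since $|f_j(z)|\le|c|\,|z|+1$, the closed disk $B=\{z\in\C:|z|\le R\}$ with $R=1/(1-|c|)$ satisfies $f_j(B)\subseteq B$ for every $j$. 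Because each $f_j$ scales distances by exactly $|c|<1$, any length-$m$ composition $f_{j_0}\circ\cdots\circ f_{j_{m-1}}$ is a contraction with factor $|c|^m$, so $\diam\bigl(f_{j_0}\circ\cdots\circ f_{j_{m-1}}(B)\bigr)\le 2R\,|c|^m$; these estimates drive everything that follows.

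Next I would record the self-similarity of $\pi$ directly from its definition. Writing out the defining limit and using continuity of $f_{j_0}$ gives the telescoping identity $\pi(j_0,j_1,\dots)=f_{j_0}\bigl(\pi(j_1,j_2,\dots)\bigr)$; ranging over the first symbol $j_0$ yields $\bigcup_j f_j\bigl(\pi(F_{n,c}^\infty)\bigr)=\pi(F_{n,c}^\infty)$, which is the required invariance, and in particular shows $\pi(F_{n,c}^\infty)\subseteq B$ is bounded. For compactness I would equip $F_{n,c}^\infty=\{0,\dots,n-1\}^{\N}$ with the product topology, under which it is a compact (Cantor) space, and check that $\pi$ is continuous: if two infinite words agree in their first $m$ letters, then their images both lie in the single set $f_{j_0}\circ\cdots\circ f_{j_{m-1}}(B)$, so they differ by at most $2R\,|c|^m$. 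Thus $\pi$ is uniformly continuous, $\pi(F_{n,c}^\infty)$ is the continuous image of a compact set and hence compact, and it is visibly nonempty.

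With nonemptiness, compactness, and invariance in hand, the uniqueness clause of Hutchinson's theorem gives $\pi(F_{n,c}^\infty)=\Lambda_{n,c}$. The step I expect to be the real content — and the one I would want to verify rather than outsource entirely to an abstract uniqueness statement — is the surjectivity direction, that every point of the attractor actually receives an address. If one prefers a self-contained argument avoiding the Hausdorff-metric fixed-point machinery, I would instead set $\Lambda_{n,c}=\bigcap_{m\ge 0}\bigcup_{|w|=m}f_w(B)$, where $f_w=f_{j_0}\circ\cdots\circ f_{j_{m-1}}$ for a word $w=(j_0,\dots,j_{m-1})$, a nested intersection of nonempty compacta, and prove equality by hand. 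The inclusion $\pi(F_{n,c}^\infty)\subseteq\Lambda_{n,c}$ is immediate from the diameter estimate, while for the reverse inclusion, given $x\in\Lambda_{n,c}$ the words $w$ with $x\in f_w(B)$ form a subtree that is infinite (it meets every level, since $x\in\bigcup_{|w|=m}f_w(B)$ for all $m$), finitely branching (there are only $n$ letters), and closed under passing to prefixes (because $f_j(B)\subseteq B$); König's lemma then extracts an infinite address $f$ with $x\in f_{w_m}(B)$ for every prefix $w_m$, and since $\pi(f)$ lies in the same shrinking sets we conclude $x=\pi(f)$. This König's-lemma extraction of a consistent infinite address from a point of the limit set is the crux of the argument.
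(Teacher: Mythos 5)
Your first argument is, in expanded form, exactly the paper's proof: the paper's entire argument is that $F_{n,c}^\infty$ is compact in the Cantor topology, $\pi$ is continuous, the image is invariant under the iterated function system, and hence by uniqueness of the attractor it equals $\Lambda_{n,c}$ --- precisely your nonempty/compact/invariant/Hutchinson-uniqueness chain, with your telescoping identity $\pi(j_0,j_1,\dots)=f_{j_0}(\pi(j_1,j_2,\dots))$ and the diameter bound $2R|c|^m$ supplying the continuity and invariance that the paper asserts without computation. Your second, self-contained argument --- realizing $\Lambda_{n,c}$ as the nested intersection $\bigcap_{m\ge 0}\bigcup_{|w|=m}f_w(B)$ and extracting an address for an arbitrary point via K\"onig's lemma --- goes beyond anything in the paper; it buys independence from the Hausdorff-metric fixed-point machinery and makes the surjectivity direction explicit rather than outsourcing it to an abstract uniqueness statement. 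The only observation worth adding is that the K\"onig extraction is essentially equivalent to the compactness of $F_{n,c}^\infty$ you have already invoked (one can equally take a limit point of words $w_m$ with $x\in f_{w_m}(B)$), so the two halves of your write-up are closer in content than they may appear; either route is correct and complete.
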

\begin{proof}
The set $F_{n,c}^\infty$ is compact in the standard Cantor topology, 
and the map $\pi$ is continuous for this topology.  The set of
infinite words is evidently invariant in the sense of
an iterated function system, so the image $\pi(F_{n,c}^\infty)$
is a compact set invariant under $F_{n,c}$, so it is $\Lambda_{n,c}$.
\end{proof}

This parameterization of $\Lambda_{n,c}$ as infinite words in
$F_{n,c}^\infty$ will be our main tool.  Let us see what the
elements of $\Lambda_{n,c}$ can be.  Suppose we have
an infinite word $f = (j_0, j_1, \ldots)$. 
We can compute the associated
point $\pi(f) \in \Lambda_{n,c}$ as
\[
\pi(f) = f_{j_0}(\pi(j_1,\ldots)) = c\pi(j_1,\ldots) + \xi^{j_0}.
\]
By induction, we see that
\[
\pi(f) = \sum_{k=0}^\infty c^k \xi^{j_k}.
\]
That is, the exponents of the $\xi$ are exactly
the letters in the infinite word $f$.
There is a clear geometric picture for such sums: at every step $k$, 
we select which power $j_k$ of $\xi$ we would like to multiply
by $c^k$ and accumulate into the sum.
Note that the set $\{c^k\xi^j\}_{j=0}^{n-1}$ is a rotated, scaled
copy of the roots of unity.  We imagine a constellation of potential vectors
at each step, and we can choose to add in any one of them.  See
Figure~\ref{fig:constellation}.

\begin{figure}[ht]
\begin{center}
\labellist
\pinlabel $c$ at 85 90
\pinlabel $\xi^0$ at 123 80
\pinlabel $\xi^1$ at 80 145
\pinlabel $\xi^2$ at 7 119
\pinlabel $\xi^3$ at 3 40
\pinlabel $\xi^4$ at 80 20
\endlabellist
\includegraphics[width=0.9\textwidth]{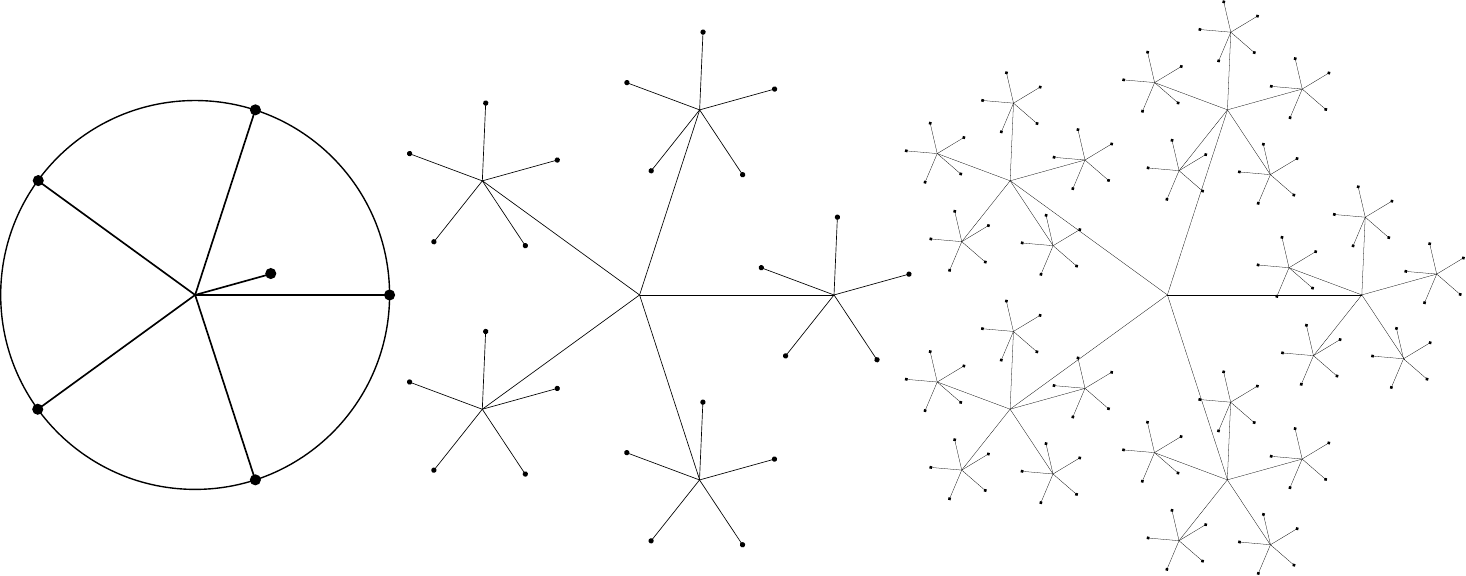}
\end{center}
\caption{Roots of unity for $n=5$ and $c$ as shown, left, and
partial sums at the first and second steps of computing $\pi(f)$ for
all possible words.  The points in $\Lambda_{n,c}$ are the result of
repeating this procedure infinitely many times.}
\label{fig:constellation}
\end{figure}

\section{Angles and hyperplanes}

Any point in $\C$ can be thought of as a vector in $\R^2$, and 
any vector $v \in \R^2$ induces a (real) linear function on $\C$
given by taking the standard inner product with $v$.  We will be
interested in the linear functions coming from the unit circle, 
so we denote by $v_\theta:\C \to \R$ the linear function given by
taking the real inner product with the point $e^{2\pi i\theta}$.

\begin{remark}
For convenience, we will normalize all angles to lie
in $[0,1]$.  Thus, as above, we refer to the argument
of $e^{2\pi i \theta}$ as ``the angle $\theta$''.
\end{remark}

The level sets of $v_\theta$ are (oriented) affine hyperplanes, and
we say that they lie at angle $\theta$.  Note that it is
the normal vector that points in the direction $\theta$.
Such a hyperplane $H$ \emph{supports}
a closed set $X \subseteq C$ if $H$ intersects $X$ and $X$
is contained in the closure of one of
the two complements of $H$ in $\C$.  Equivalently, $H$ supports $X$ if 
the value of $v_\theta$ on $X\cap H$ (which must be constant) is maximal
over all points in $X$.  The points which lie in the intersection of $X$
and a hyperplane at angle $\theta$ which supports it are
\emph{extreme points} at angle $\theta$.

Turning to our problem of interest,
we now consider the extreme points in $\Lambda_{n,c}$
in terms of the parameterization of $\Lambda_{n,c}$
from Section~\ref{sec:limit_set} and Lemma~\ref{lem:param}.
The extreme points at angle $\theta$
are those points $p$ such that $v_\theta(p)$ is maximal over
$\Lambda_{n,c}$.  Denote the set of extreme points in $\Lambda_{n,c}$ at angle $\theta$
by $E_{n,c,\theta}$.
Let us be given an infinite word $f = (j_0,\ldots) \in F_{n,c}^\infty$.
We say that $f$ is an \emph{extreme word} at angle $\theta$
if $\pi(f)$ is an extreme point at angle $\theta$.  Denote the set of
extreme words in $F_{n,c}$ at angle $\theta$ by $W_{n,c,\theta}$.
It is cleaner to characterize the extreme words $W_{n,c,\theta}$ in
$F_{n,c}^\infty$ and then translate that understanding to
the set of extreme points $E_{n,c,\theta}$ in $\Lambda_{n,c}$.

We have
\[
\pi(f) = \sum_{k=0}^\infty c^k \xi^{j_k},
\]
so 
\[
v_\theta(\pi(f)) = \sum_{k=0}^\infty v_\theta(c^k\xi^{j_k}).
\]
Now $v_\theta$ is real linear but not complex linear,
so we cannot pull the coefficient $c^k$ out.  It is difficult
to understand exactly what this sum is equal to, but it is
not difficult to maximize it: we just need to maximize each summand.
For each $k$, the value of $v_\theta(c^k\xi^{j_k})$ is maximized
when the vector $c^k\xi^{j_k}$ is the closest to angle $\theta$
among the vectors $\{c^k\xi^j\}_{j=0}^{n-1}$.  So to construct 
an extreme point $p$ at angle $\theta$, we choose,
for each $k$, the letter $j_k$ such that $c^k\xi^{j_k}$ is closest to $\theta$.
It is possible that there will be multiple choices which are equidistant from
$\theta$, in which case there are exactly two options.
This produces a very clean geometric picture of each extreme
point, which is illustrated in Figure~\ref{fig:extreme}.

\begin{figure}[ht]
\begin{center}
\labellist
\pinlabel $\{c^0\xi^j\}_{j=0}^4$ at 45 22
\pinlabel $\{c^1\xi^j\}_{j=0}^4$ at 140 22
\pinlabel $\{c^2\xi^j\}_{j=0}^4$ at 225 22
\pinlabel $\{c^3\xi^j\}_{j=0}^4$ at 315 22
\pinlabel $\theta$ at 490 185
\pinlabel $H$ at 510 155
\endlabellist
\includegraphics[width=0.98\textwidth]{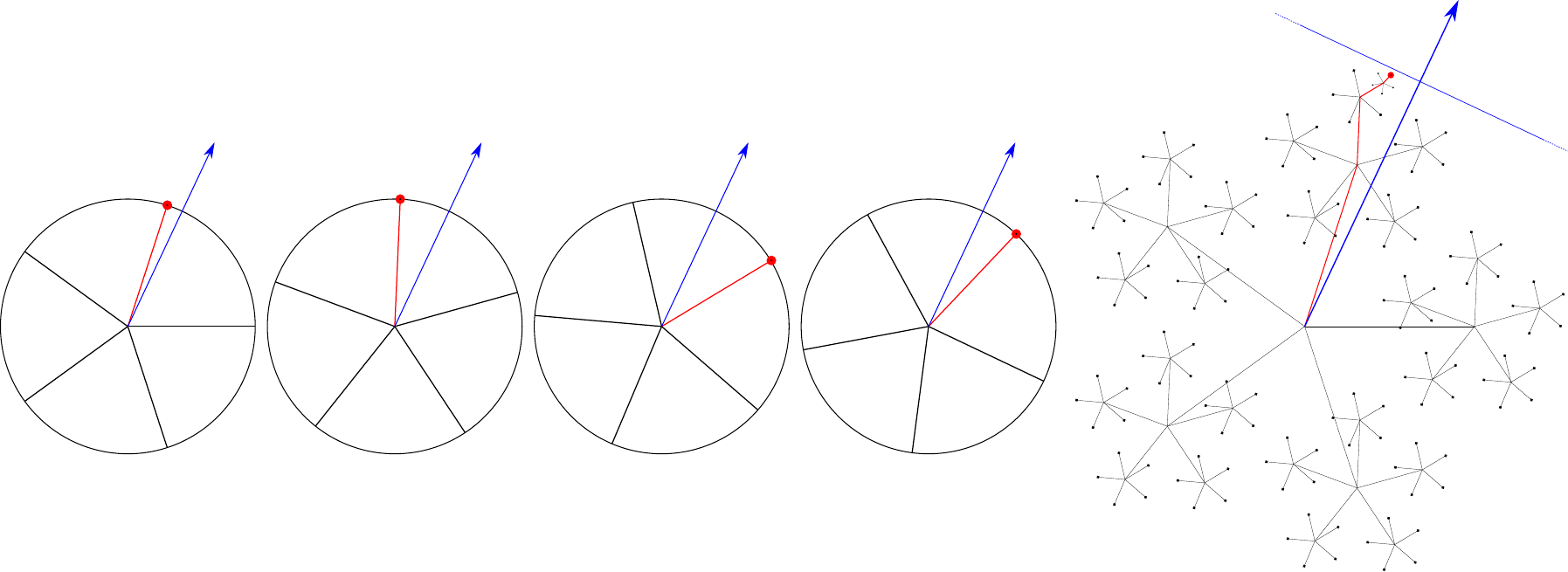}
\end{center}
\caption{To produce an extreme point at angle $\theta$, we
consider the linear function $v_\theta$, where $e^{2\pi i\theta}$
is the arrow indicated in blue.  For each $k$, we are forced
to choose $j_k$ such that the $v_\theta(c^k\xi^{j_k})$ 
is maximal for the vectors $\{c^k\xi^j\}_{j=0}^{n-1}$.  The partial
sum after $4$ steps is shown on right.}
\label{fig:extreme}
\end{figure}

Everything which follows is basically a direct observation
from Figure~\ref{fig:extreme}.  As mentioned above, for each
given $k$, there are two possibilities for the set of
function values $\{v_\theta(c^k\xi^j)\}_{j=0}^{n-1}$.  Either
all of these values are distinct, in which case the
choice of coordinate $j_k$ is forced, or there is some $j_k$ such that
$v_\theta(c^k\xi^{j_k}) = v_\theta(c^k\xi^{j_k+1})$.
This happens exactly when the angle $\theta$ is equidistant from
the arguments of $c^k\xi^{j_k}$ and $c^k\xi^{j_k+1}$.
Here and in what follows, we take all powers of $\xi$ modulo $n$.
The two situations are shown in Figure~\ref{fig:extreme_choices}.

\begin{figure}[ht]
\begin{center}
\includegraphics[width=0.50\textwidth]{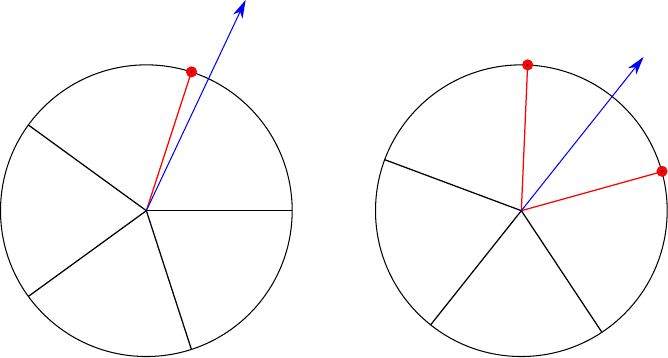}
\end{center}
\caption{When $\theta$ is equidistant from the arguments
of $c^k\xi^{j_k}$ and $c^k\xi^{j_k+1}$, as on the right, both $j_k$ and
$j_k+1$ are allowed as coordinate $k$ in an extreme
point at angle $\theta$.}
\label{fig:extreme_choices}
\end{figure}

We now formalize the observations from Figures~\ref{fig:extreme}
and~\ref{fig:extreme_choices}.
Let $n \in \mathbb{N}$, $c = r_ce^{2\pi i\phi}$ and $\theta \in [0,1]$ be given.
For each $k\ge 0$, define a set $J_{n,c,\theta,k} \subset \{0,\ldots, n-1\}$ as follows.
If there exists an integer $m \in [0,n-1]$ such that
\[
k\phi + \frac{m}{n} + \frac{1}{2n} \equiv \theta \bmod 1,
\]
then $J_{n,c,\theta,k} = \{m,m+1\}$ (recall all indices are taken modulo $n$).
Otherwise, $J_{n,c,\theta,k} = \{m\}$ where $k\phi + m/n$ is closest to $\theta$.

\begin{lemma}\label{lem:extreme_words}
In the above notation,
\[
W_{n,c,\theta} = \prod_{k=0}^\infty J_{n,c,\theta,k}.
\]
\end{lemma}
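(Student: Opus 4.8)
The plan is to exploit the additive decomposition already displayed in the text,
\[
v_\theta(\pi(f)) = \sum_{k=0}^\infty v_\theta(c^k\xi^{j_k}),
\]
together with the key structural fact that the $k$-th summand depends only on the single letter $j_k$. Writing $c = r_c e^{2\pi i\phi}$, one computes $c^k\xi^j = r_c^k e^{2\pi i(k\phi + j/n)}$, so that
\[
v_\theta(c^k\xi^j) = r_c^k\cos\bigl(2\pi(k\phi + j/n - \theta)\bigr),
\]
which is uniformly bounded in absolute value by $r_c^k$. Since $r_c < 1$, the series $\sum_k r_c^k$ converges; this is what will let me treat the maximization of the infinite sum as a genuine termwise problem rather than a merely formal one. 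By Lemma~\ref{lem:param} the maximum of $v_\theta$ over $\Lambda_{n,c}$ equals the supremum of $v_\theta(\pi(f))$ over all words $f$.

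First I would fix $k$ and describe the set of letters maximizing the $k$-th summand. Because $r_c^k > 0$, maximizing $v_\theta(c^k\xi^j)$ over $j$ is identical to choosing the angle $k\phi + j/n$ closest to $\theta$ among the $n$ equally spaced candidates $\{k\phi + j/n\}_{j=0}^{n-1}$. A unique closest angle exists unless $\theta$ lands exactly at the midpoint of an adjacent pair $k\phi + m/n$ and $k\phi + (m+1)/n$. That midpoint is $k\phi + m/n + 1/(2n)$, so a tie occurs precisely when $k\phi + m/n + 1/(2n) \equiv \theta \bmod 1$ for some $m$, and then exactly the two letters $m$ and $m+1$ are optimal. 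This is precisely the definition of $J_{n,c,\theta,k}$; I would also record that such an $m$ is unique modulo $n$, so the set is well-defined.

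With the per-coordinate maxima $M_k := \max_j v_\theta(c^k\xi^j)$ in hand, both inclusions follow from one convergent-series argument. For any word $f$,
\[
v_\theta(\pi(f)) = \sum_{k=0}^\infty v_\theta(c^k\xi^{j_k}) \le \sum_{k=0}^\infty M_k,
\]
and the right-hand bound is finite (dominated by $\sum_k r_c^k$) and is attained by any word with $j_k \in J_{n,c,\theta,k}$ for all $k$. Hence $\sum_k M_k$ is the maximum of $v_\theta$ over $\Lambda_{n,c}$ and every such word is extreme, giving $\prod_k J_{n,c,\theta,k} \subseteq W_{n,c,\theta}$. Conversely, if $f$ is extreme then $v_\theta(\pi(f)) = \sum_k M_k$; since each $M_k - v_\theta(c^k\xi^{j_k}) \ge 0$ and these nonnegative terms sum to zero, each must vanish, forcing $j_k \in J_{n,c,\theta,k}$ for every $k$ and yielding the reverse inclusion. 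The one subtle step, and the place I expect the argument to need the most care, is exactly this passage from equality of the aggregate sum to termwise equality: it relies on the convergence coming from $r_c < 1$, without which infinitely many coordinates could in principle compensate for one another. Everything else reduces to the elementary geometry of which of $n$ equally spaced points is nearest to $\theta$.
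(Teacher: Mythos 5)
Your proof is correct and follows essentially the same route as the paper's: distribute $v_\theta$ over the series $\pi(f)=\sum_k c^k\xi^{j_k}$ and maximize each summand independently, with the midpoint condition $k\phi+m/n+1/(2n)\equiv\theta \bmod 1$ characterizing exactly when two letters tie. The paper leaves the termwise-optimization step as ``contained in the preceding discussion,'' whereas you rigorize it via the nonnegative-deficit argument ($\sum_k\bigl(M_k - v_\theta(c^k\xi^{j_k})\bigr)=0$ forces each term to vanish, legitimate because $\sum_k r_c^k$ converges), which is a faithful filling-in of the same idea rather than a different approach.
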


That is, the set of extreme words in $F_{n,c}^\infty$
at angle $\theta$ is the infinite cartesian product of the sets $J_{n,c,\theta,k}$.
Or, more simply, to enumerate all the extreme words, we can go one letter
at a time.  When we need to decide on letter $k$, we consult
$J_{n,c,\theta,k}$ to 
see what the allowed letters are, and any choice is allowed (all constraints
on the letters are local).

\begin{proof}[Proof of Lemma~\ref{lem:extreme_words}]
The proof is contained in the preceding discussion: if we are given
an extreme word $f= (j_0,\ldots)$, and we write the linear function
distributed over the sum:
\[
v_\theta(\pi(f)) = \sum_{k=0}^\infty v_\theta(c^k\xi^{j_k}),
\]
then is suffices to optimize each term independently.  The definition
of $J_{n,c,\theta,k}$ is the condition to check that
there are two equidistant options
as in Figure~\ref{fig:extreme_choices}.
\end{proof}

\section{Extreme point alternatives}

\subsection{Main theorem}

We now follow the line of reasoning begun in Lemma~\ref{lem:extreme_words}.
 We first state
our main theorem and then spend the rest of the section proving 
and explaining it.

\begin{theorem}\label{thm:alternatives}
As above, let $c = r_ce^{2\pi i\phi}$.
\begin{enumerate}
\item \label{thm_part:irrational} If $\phi \notin \Q$, then
\begin{enumerate}
\item For all $\theta$, we have $|W_{n,c,\theta}| = |E_{n,c,\theta}| \in \{1,2\}$.
\item The set of $\theta$ such that $|W_{n,c,\theta}|=1$ is dense in $[0,1]$.
\item The set of $\theta$ such that $|W_{n,c,\theta}|=2$ is dense in $[0,1]$.
\item $\Lambda_{n,c}$ is not convex.
\end{enumerate}

\item \label{thm_part:rational} If $\phi\in\Q$ with $\phi = p/q$ in lowest terms, then let
$b$ be such that $bn = \textnormal{lcm}(n,q)$ (e.g.
if $\gcd(n,q)=1$, then $b=q$).  Let
\[
\Theta = \left\{\ell\phi + \frac{m}{n} + \frac{1}{2n}\right\}_{\ell=0,m=0}^{b-1,n-1}.
\]
Then
\begin{enumerate}
\item For $\theta \in \Theta$, we have that $E_{n,c,\theta}$
is itself the limit set of an iterated function system
with two generators conjugate to $F_{2,|c|^b}$.
That is, a Cantor set with dilation factor $|c|^b$
(or an interval if $|c|^b \ge 1/2$).
\item For $\theta \notin \Theta$, we have
$|W_{n,c,\theta}|=|E_{n,c,\theta}|=1$.
\item The convex hull of $\Lambda_{n,c}$ is a polygon
with $nb$ sides at the angles in $\Theta$.
\item \label{thm_part:convex} If $\Lambda_{n,c}$ is convex, then $|c| \ge 2^{-1/b}$.
\end{enumerate}
\end{enumerate}
\end{theorem}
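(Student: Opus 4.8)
The plan is to derive this bound directly from the two structural facts established earlier in the same theorem, namely part~(c) (the convex hull of $\Lambda_{n,c}$ is a polygon with $nb$ sides at the angles in $\Theta$) and part~(a) (for $\theta\in\Theta$ the face $E_{n,c,\theta}$ is the limit set of a two-generator system conjugate to $F_{2,|c|^b}$). The guiding idea is that convexity forces each edge of this polygon to be contained in $\Lambda_{n,c}$, and an edge is a full line segment precisely when the corresponding face is an interval rather than a Cantor set; by part~(a) the interval case occurs exactly when $|c|^b \ge 1/2$, which rearranges to the asserted inequality. I would begin by observing that if $\Lambda_{n,c}$ is convex then it equals its own convex hull, so by part~(c) it is the polygon $P$ with $nb$ sides at the angles in $\Theta$. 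Fixing any $\theta\in\Theta$ and letting $H_\theta$ denote the supporting hyperplane at angle $\theta$, the intersection $\Lambda_{n,c}\cap H_\theta$ is by definition exactly $E_{n,c,\theta}$; but since $\Lambda_{n,c}=P$, this intersection is also exactly the side of $P$ at angle $\theta$, which is a nondegenerate closed segment. Hence $E_{n,c,\theta}$ is that full segment.

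Next I would invoke part~(a): for $\theta\in\Theta$ the set $E_{n,c,\theta}$ is a Cantor set when $|c|^b < 1/2$ and an interval when $|c|^b \ge 1/2$. Since we have just shown $E_{n,c,\theta}$ equals a full segment, it is connected, so the totally disconnected Cantor alternative is excluded and we must be in the interval case $|c|^b \ge 1/2$. Taking $b$th roots (and using $b>0$) gives $|c| \ge (1/2)^{1/b} = 2^{-1/b}$, which is the claim.

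There is essentially no hard analysis here, since the substantive work is already carried out in parts~(a) and~(c); the proof is a matter of assembling the logic correctly. The two identifications in the middle step are the only places needing care: that $\Lambda_{n,c}\cap H_\theta = E_{n,c,\theta}$, which is immediate from the definitions of the supporting hyperplane and of extreme points, and that the side of $P$ at angle $\theta$ is genuinely nondegenerate so that the relevant dichotomy is ``full segment'' versus ``Cantor set''; this follows because part~(c) asserts $P$ has $nb$ honest sides, each of positive length. Equivalently, one may phrase the argument as its contrapositive: if $|c|^b < 1/2$ then each $E_{n,c,\theta}$ with $\theta\in\Theta$ is a Cantor set, hence a proper closed subset of the corresponding edge of $\mathrm{conv}(\Lambda_{n,c})$, so $\Lambda_{n,c}$ omits points of its own convex hull and therefore fails to be convex.
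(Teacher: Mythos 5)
Your derivation of part~(2)(d) is correct, and for that single clause it is essentially the paper's own argument: the paper likewise observes that if $\Lambda_{n,c}$ is convex then for $\theta\in\Theta$ the face $E_{n,c,\theta}$ must be an interval, and then reads off $|c|^b\ge 1/2$ from the description of that face in part~(a) as the limit set of a system conjugate to $F_{2,|c|^b}$. Your attention to the two identifications --- that $\Lambda_{n,c}\cap H_\theta = E_{n,c,\theta}$ by the definition of extreme points, and that the sides of the hull are nondegenerate so the dichotomy is genuinely ``interval versus Cantor set'' --- is appropriate, and your contrapositive restatement is also valid.

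The genuine gap is one of scope: the statement to be proved is all of Theorem~\ref{thm:alternatives}, and your proposal proves only sub-claim~(2)(d) while taking parts~(2)(a) and~(2)(c) --- which carry essentially all of the theorem's content --- as given. Nothing in your write-up addresses the irrational case~(1)(a)--(d), whose proof rests on the local-optimization characterization of extreme words (Lemma~\ref{lem:extreme_words}): one must show that two indices $k\ne k'$ with $|J_{n,c,\theta,k}|=|J_{n,c,\theta,k'}|=2$ would force $(k-k')\phi \equiv (m-m')/n \bmod 1$ and hence $\phi\in\Q$, and then exhibit the dense families $\theta\in\phi\Z$ (unique extreme word) and $\theta\in\phi\Z+1/(2n)$ (exactly two extreme words, whose images under $\pi$ differ in one nonzero summand, giving non-convexity). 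Likewise absent are the proofs of~(2)(a)--(c): the periodicity of the constellations $\{c^k\xi^j\}_{j=0}^{n-1}$ with minimal period $b$ where $bn=\mathrm{lcm}(n,q)$ (Lemma~\ref{lem:period}, which is what handles the case $\gcd(n,q)>1$), and the identification of each face $E_{n,c,\theta}$, $\theta\in\Theta$, with the limit set of the two-generator system built from the two admissible length-$q$ prefixes $w_0, w_1$, namely $z\mapsto c^q z + w_0$ and $z\mapsto c^q z + w_1$, which is conjugate to $F_{2,|c|^b}$ because the relevant power of $c$ acts as a real dilation. As submitted, your proposal is a correct conditional derivation of one clause from the others, not a proof of the theorem.
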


\subsection{The irrational case}

Before the formal proof of the rational case of Theorem~\ref{thm:alternatives}, we give the picture of the
proof, which is
actually much more convincing.  If we construct an infinite
word $f \in F_{n,c}^\infty$ by building each summand in the infinite sum
$\pi(f)$ as in Figure~\ref{fig:constellation}, then for each $k$ we have $n$
options of which power $j_k$ to use in the term $c^k\xi^{j_k}$ to accumulate.
As we have seen in Figures~\ref{fig:extreme} and~\ref{fig:extreme_choices},
if we are building an extreme word for angle $\theta$, our choice is dictated
by which $j_k$ makes $c^k\xi^{j_k}$ as close to $\theta$ as possible.
Let us consider what the set of options $\{c^k\xi^j\}_{j=0}^{n-1}$ looks 
like when $\phi \notin \Q$.  The set $\Z\phi$ is dense in the interval $[0,1]$,
and there are no distinct integers $k,k'$ such that $k\phi\equiv k'\phi \bmod 1$.
That is, the constellation of choices never repeats.  Thus, if we ever happen
to stumble upon an index $k$ with $|J_{n,c,\theta,k}|=2$, it will never happen again.
Figure~\ref{fig:irrational_constellations} gives a picture.

\begin{figure}[ht]
\begin{center}
\labellist
\pinlabel $c$ at 67 51
\pinlabel $\theta$ at 75 100
\pinlabel $\theta$ at 167 100
\endlabellist
\includegraphics[width=0.50\textwidth]{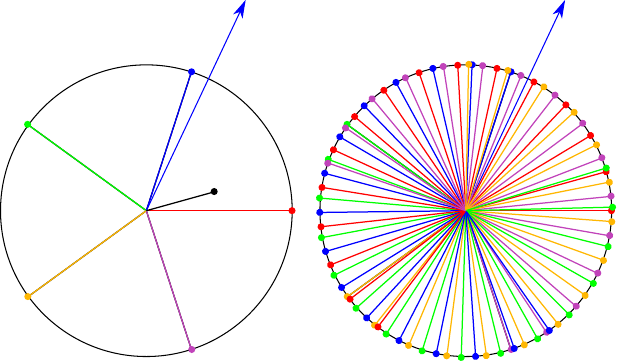}
\end{center}
\caption{When $\phi \notin \Q$, the constellations $\{c^k\xi^j\}_{j=0}^{n-1}$
are dense (and not periodic).  Thus, if we ever find a $k$
as in Figure~\ref{fig:extreme_choices} with two equidistant choices for some given $\theta$,
that is the only $k$ for which it occurs.  The figure shows the constellations
for $k=0,\ldots,15$.}
\label{fig:irrational_constellations}
\end{figure}

\begin{proof}[Proof of Theorem~\ref{thm:alternatives}(\ref{thm_part:irrational})]
To prove (a), it suffices to show that for all $\theta$,
there is at most one $k$ such that $|J_{n,c,\theta,k}| = 2$.
Towards a contradiction, assume there two distinct such values $k,k'$.
Then there exist $m,m'$ integers with
\[
k\phi + \frac{m}{n} + \frac{1}{2n} \equiv \theta \equiv k'\phi + \frac{m'}{n} + \frac{1}{2n} \bmod 1
\]
So $(k-k')\phi \equiv (m-m')/n \bmod 1$.  Since $k\ne k'$, this
implies that $\phi$ is rational, which is a contradiction.

To prove (b) and (c), we exhibit sets of $\theta$ with the desired
properties.  First, for any integer $\ell$, set $\theta = \ell\phi$.
Then suppose there is any integer $k$ with
\[
k\phi + \frac{m}{n} + \frac{1}{2n} \equiv \theta =\ell\phi \bmod 1.
\]
We conclude that $(k-\ell)\phi \equiv m/n + 1/2n \bmod 1$.  The expression
on the right is never $0$, which implies that $\phi \in \Q$,
a contradiction.  Thus for all $k$ we have $|J_{n,c,\theta,k}|=1$, meaning
$|W_{n,c,\theta}|=1$ and hence $|E_{n,c,\theta}=1$.

Next, we set $\theta = \ell\phi + 1/2n$ for any integer $\ell$.  Now we have
\[
k\phi + \frac{m}{n} + \frac{1}{2n} \equiv \ell\phi + \frac{1}{2n} \bmod 1,
\]
so $(k-\ell)\phi \equiv m/n \bmod 1$.  This does have exactly one solution, where $k=\ell$ and $m=0$.  Hence there is exactly one $k$ such that
$|J_{n,c,\theta,k}|=2$, so $|W_{n,c,\theta,k}|=2$.  In general, it is
difficult to know the size of $E_{n,c,\theta}$ from knowing
$W_{n,c,\theta}$.  However, in this case the two extreme words differ
in exactly one letter, so their images under $\pi$ differ in exactly one
(nonzero) summand, so there are exactly two extreme points at
angle $\theta$.

We have proved that for all $\theta \in \phi\Z$, we have
$|W_{n,c,\theta}|=|E_{n,c,\theta}|=1$ and for all
$\theta \in \phi\Z + 1/2n$, we have $|W_{n,c,\theta}|=|E_{n,c,\theta}|=1$.
These sets are dense in the circle (represented here by the interval
$[0,1]$) because $\phi \in \Q$, and we have proved (b) and (c).

Claim (d) is an immediate consequence of (c): if we exhibit any 
single $\theta$ with $|E_{n,c,\theta}|=2$, then every point on the
line segment between these two points lies in the convex hull of
$\Lambda_{n,c}$ but not in $\Lambda_{n,c}$ itself, meaning that
$\Lambda_{n,c}$ is not convex.
\end{proof}

\subsection{The rational case}

We now turn to understanding the extreme points in $\Lambda_{n,c}$
when $\phi \in \Q$.  This case is simpler in some ways
but more technical and interesting in others.  In particular,
we will now find sets of extreme words at certain angles which are infinite.
In order to simplify the reasoning, we will first
restrict $c = r_ce^{2\pi i\phi}$ such that $\phi\in\Q$
behaves nicely with respect to $n$, meaning that $\phi=p/q$ with $\gcd(n,q)=1$.
The general case will follow by proving a technical lemma
(Lemma~\ref{lem:period}) and observing that with this lemma in hand,
the general argument
is essentially the same. 
The truely helpful picture of the situation,
analogous to Figure~\ref{fig:irrational_constellations}, is shown
in Figure~\ref{fig:rational_constellations}.  When $\phi\in\Q$, one can see that
the constellations $\{c^k\xi^j\}_{j=0}^{n-1}$ are periodic in $k$.
The restriction that $\gcd(n,q)=1$ ensures that this period is
easy to compute (it is $nq$).

\begin{figure}[ht]
\begin{center}
\labellist
\pinlabel $c$ at 41 69
\pinlabel $\theta$ at 75 100
\pinlabel $\theta$ at 167 100
\endlabellist
\includegraphics[width=0.50\textwidth]{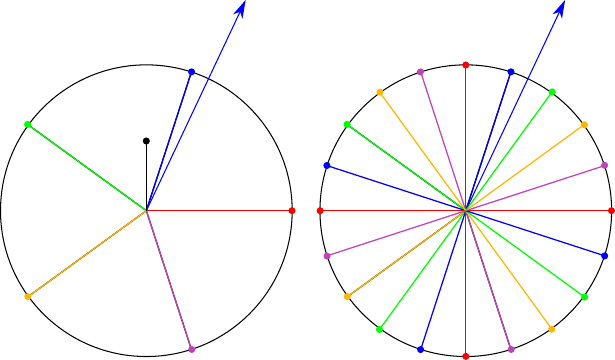}
\includegraphics[width=0.4\textwidth]{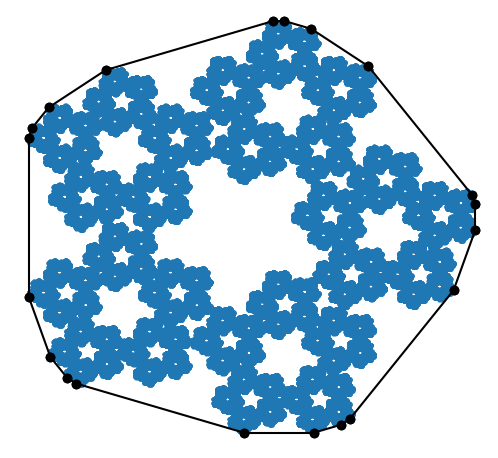}
\end{center}
\caption{When $\phi \in \Q$, the constellations $\{c^k\xi^j\}_{j=0}^{n-1}$
are periodic.  Here $\phi = 1/4$ and $n=5$.  The constellations for
all $k$ in the same residue class modulo $4$ are the same.
The limit set $\Lambda_{5,(2/5)i}$ is shown on the right
with the supporting hyperplanes at the angles in $\Theta$.  The
Cantor set of extreme points in each face appears to be just two points
because the dilation factor $|c|^q \approx 0.01$ is so small.}
\label{fig:rational_constellations}
\end{figure}

\begin{proof}[Proof of Theorem~\ref{thm:alternatives}(\ref{thm_part:rational}) when $\gcd(n,q)=1$]
Because $\phi=p/q$, we have $c^q = r_c^q$, i.e. the
angle of $c^k\xi^j$ is the same as the angle of $c^{k+q}\xi^j$.
This implies that the angles of the constellations
$\{c^k\xi^j\}_{j=0}^{n-1}$ are periodic in $k$, and
for all $k$ and $\theta$, we have
$J_{n,c,\theta,k} = J_{n,c,\theta,k+q}$.
The fact that $\gcd(q,n)=1$ means that we do not have any additional
equalities --- that is, there are no $k,k',j,j'$ 
with $0\le k,k' < q-1$ and $0\le j,j' < n-1$ with
$\arg(c^k\xi^j) = \arg(c^{k'}\xi^{j'})$.

The combination of these two facts implies that for exactly
the angles listed in $\Theta$, we have $|E_{n,c,\theta}| > 1$.
Now consider a specific $\theta \in \Theta$.  The product
$\prod_{k=0}^{q-1} J_{n,c,\theta,k}$ contains exactly two
words, since there will be exactly one $k$ for which we
have two options.  Call these words $w_0$ and $w_1$.
Now if $f \in W_{n,c,\theta}$ is any extreme word for $\theta$,
the discussion above shows that both words
$w_0 + f$ and $w_1 + f$ (where $+$ means word concatenation) are
also extreme words for $\theta$.  Pushing forward under $\pi$,
this means that the limit set of the iterated function
system generated by the functions
\[
z \mapsto c^qz + w_0 \qquad z \mapsto c^qz + w_1
\]
is exactly the set of extreme points $E_{n,c,\theta}$.
Since $c^q = r_c^q$ is real, this is conjugate to
the function system $F_{2,|c|^q}$, which has limit
set as described.

If we consider any $\theta_0, \theta_1$ consecutive elements
of $\Theta$, then note that because of the discreteness of
the constellations, for any two $\theta,\theta'$ with
$\theta_0 < \theta,\theta' < \theta_1$, we have
$J_{n,c,\theta,k} = J_{n,c,\theta',k}$ for all $k$.
Therefore, all these angles share one extreme point.

The previous two paragraphs imply facts (a), (b), and (c).
To see (d), note that if we are to have $\Lambda_{n,c}$ convex,
then for $\theta\in\Theta$, the set of extreme points $E_{n,c,\theta}$
must be an interval.  Thus $|c|^q \ge 1/2$, or $|c| \ge 2^{-1/q}$.
\end{proof}

For the general case when we do not necessarily have $\gcd(n,q) = 1$,
the only thing we need to determine is the periodicity of
(the angles of)
the constellation $\{c^k\xi^j\}_{j=0}^{n-1}$ as $k$ varies.
For simplicity, denote by $C_k$ the set of angles
$\{\arg(c^k\xi^j)\}_{j=0}^{n-1}$.  We are careful to say a \emph{set},
because while assuming, as above, that $\phi = p/q$ in lowest terms,
we certainly have that $C_k = C_{k+q}$, but we
might have $C_k = C_{k+b}$ for some $b<q$ where the elements are
not in the same order.  As an example, if
$\phi = 1/6$ and $n=3$, then $C_k = C_{k+2}$.

\begin{lemma}\label{lem:period}
In the above notation, let $b$ be the smallest positive integer
such that $b(1/q) = a(1/n)$ for some integer $a$.
Equivalently, let $bn = \textnormal{lcm}(n,q)$. Then
for all $k$, we have $C_k = C_{k+b}$ and $C_k \cap C_{k+i} = \varnothing$
for $0 < i < b$.
\end{lemma}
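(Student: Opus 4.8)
The plan is to recognize each set $C_k$ as a coset of a fixed finite subgroup of the circle $\R/\Z$, so that the whole statement reduces to the elementary fact that two cosets of a subgroup are either equal or disjoint. First I would write the angles out explicitly: since $c = r_ce^{2\pi i\phi}$ with $\phi = p/q$, we have $\arg(c^k\xi^j) \equiv k\phi + j/n \equiv kp/q + j/n \bmod 1$. Viewing angles as elements of $\R/\Z$, this says $C_k = kp/q + G$, where $G = \tfrac1n\Z/\Z$ is the subgroup of $n$-torsion points, equivalently the set of arguments of the $n$th roots of unity. Because the $n$ values $j/n$ are distinct modulo $1$, each $C_k$ is a genuine coset of $G$ consisting of exactly $n$ points.

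With this description both conclusions are immediate once I locate the correct period. Since $C_{k+i} = (k+i)p/q + G = C_k + ip/q$ is the coset $C_k$ translated by $ip/q$, the cosets $C_k$ and $C_{k+i}$ coincide if and only if their difference $ip/q$ lies in $G$, and they are disjoint otherwise, because the cosets of $G$ partition $\R/\Z$. Hence the periodicity $C_k = C_{k+b}$ and the disjointness $C_k \cap C_{k+i} = \varnothing$ for $0 < i < b$ will both follow as soon as I show that the $b$ of the statement is the least positive integer $i$ with $ip/q \in G$.

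It then remains to carry out the small number-theoretic computation, which is the only point requiring care. Because $\gcd(p,q) = 1$, the elements $ip/q$ and $i/q$ of $\tfrac1q\Z/\Z$ have the same order $q/\gcd(i,q)$ in $\R/\Z$, so $ip/q \in G$ if and only if $i/q \in G$; phrased differently, multiplication by $p$ is an automorphism of the cyclic group $\tfrac1q\Z/\Z$ and hence fixes its unique $n$-torsion subgroup $G \cap \tfrac1q\Z/\Z$. The condition $i/q \in G$ says $i/q = a/n$ for some integer $a$, i.e. $in = aq$, i.e. $q \mid in$; writing $d = \gcd(n,q)$, the least positive solution is $i = q/d$, for which $in = qn/d = \textnormal{lcm}(n,q)$. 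This is exactly the $b$ of the statement, in both of its equivalent descriptions, which completes the argument. The main obstacle, such as it is, lies precisely in justifying that the factor $p$ may be dropped—the reduction from $ip/q \in G$ to $i/q \in G$ using $\gcd(p,q)=1$—and in confirming that the minimal exponent $q/\gcd(n,q)$ matches the condition $bn = \textnormal{lcm}(n,q)$.
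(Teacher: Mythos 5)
Your proof is correct, and while the underlying arithmetic is the same comparison of angles $kp/q + j/n \bmod 1$ that the paper uses, your organization is genuinely different and in one respect stronger. The paper proves the two conclusions separately by direct computation: a chain of equalities $\arg(c^{k+b}\xi^j) = \phi k + (pa+j)/n$ for periodicity, and a contradiction with the minimality of $b$ for disjointness. You instead observe that each $C_k$ is the coset $kp/q + G$ of the $n$-torsion subgroup $G = \tfrac1n\Z/\Z$, so that ``equal or disjoint'' is automatic and the whole lemma reduces to computing the least $i$ with $ip/q \in G$. This buys you something concrete: the paper's disjointness step asserts that $(p/q)i \equiv (j_1-j_2)/n \bmod 1$ for $0 < i < b$ contradicts the minimality of $b$, but $b$ was defined by $b/q \in \tfrac1n\Z$ \emph{without} the factor $p$, so the paper is implicitly using that $ip/q \in G$ iff $i/q \in G$ --- a step it never justifies. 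You isolate exactly this point and prove it (multiplication by $p$ is an automorphism of $\tfrac1q\Z/\Z$ since $\gcd(p,q)=1$, hence preserves its $n$-torsion), and you verify that the minimal solution $i = q/\gcd(n,q)$ matches both descriptions of $b$ in the statement. Your version is thus a complete proof where the paper's has a small elided step; the paper's version, in exchange, is shorter and stays closer to the explicit angle bookkeeping used elsewhere in the argument. (Incidentally, the paper's displayed equation $(p/q)i = (j_1-j_2)n$ is a typo for $(p/q)i \equiv (j_1-j_2)/n \bmod 1$, which your coset formulation sidesteps entirely.)
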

\begin{proof}
It is immediate that $C_k = C_{k+b}$ when $b(1/q) = a(1/n)$
because
\[
\arg(c^{k+b}\xi^j) = \phi(k+b) + j/n = \phi k + (p/q)b + j/n = \phi k + (pa+j)/n = \arg(c^k\xi^{pa+j})
\]
If we supposed towards a contradiction that for $i < b$ we have
$C_k \cap C_{k+i} \ne \varnothing$, then an analogous chain of equalities
gives that there are $j_1,j_2$ with
\[
(p/q)(k+i) + j_1/n = (p/q)k + j_2/n,
\]
so $(p/q)i = (j_1-j_2)n$.  But $b$ is the smallest positive integer
such that this can hold, which contradicts that $i<b$.
\end{proof}

\begin{proof}[Proof of Theorem~\ref{thm:alternatives}(\ref{thm_part:rational}) in the general case]
This is essentially a corollary of the above Lemma~\ref{lem:period}:
the proof of Theorem~\ref{thm:alternatives}(\ref{thm_part:rational}) in
the case when $\gcd(n,q)=1$ depends only on
characterizing the periodicity of the angles $C_k$,
which is done by Lemma~\ref{lem:period}.
\end{proof}

\subsection{Final remarks}

A consequence of Theorem~\ref{thm:alternatives}(\ref{thm_part:convex})
is that for any $n$ and $c$, if $\Lambda_{n,c}$ is convex,
then $|c| \ge 1/2$ (see~\cite{Himeki}, Proposition~2.1).
Furthermore, this bound is sharp from the perspective of the
set of extreme points:
there are $c$ (in particular, $c=1/2 + 0i$) with $|c|=1/2$
such that every $E_{n,c,\theta}$ is an interval.
Note that this does \emph{not} imply that $\Lambda_{n,c}$ is convex:
consider the Sierpinski triangle,
where the boundary of the limit set is a finite sided polygon, but
the limit set itself is complicated.  Figure~\ref{fig:examples}
shows some more examples of limit sets and their supporting hyperplanes.

\begin{figure}[ht]
\begin{center}
\includegraphics[width=0.43\textwidth]{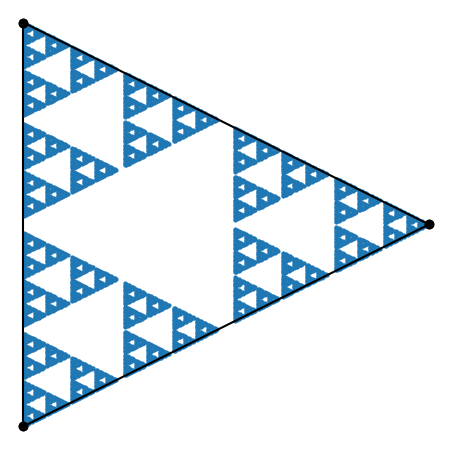}
\includegraphics[width=0.46\textwidth]{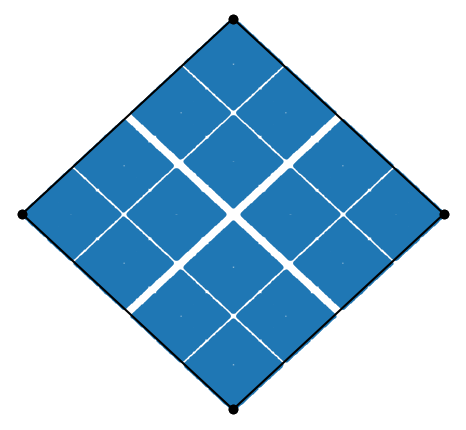}

\includegraphics[width=0.46\textwidth]{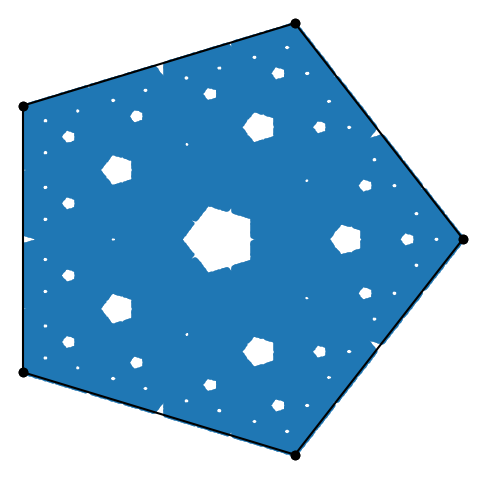}
\includegraphics[width=0.46\textwidth]{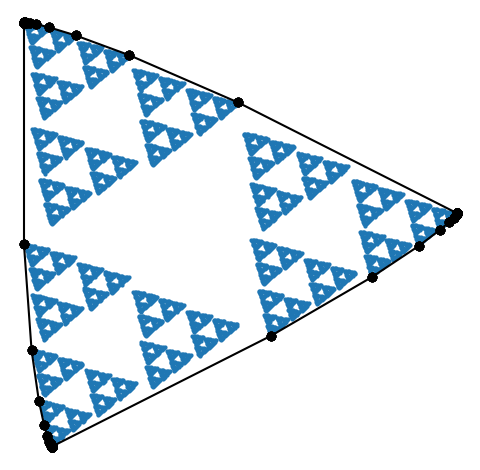}
\end{center}
\caption{A variety of limit sets with $c = 0.48 + 0i$ and supporting hyperplanes,
showing how the limit set may or may not be convex, even when
the sets of extreme points are intervals (the dilation
amounts here are just shy of $0.5$ to show the detail).  A small twist
of $\phi = 1/100$ in the lower right produces a much more
interesting collection of extreme points, and the boundary 
of the convex hull is now a polygon with $300$ sides.}
\label{fig:examples}
\end{figure}


\begin{thebibliography}{99}
\bibitem{Barnsley_Harrington}
	M. Barnsley and A. Harrington,
	\emph{A Mandelbrot set for pairs of linear maps},
	Phys. D. {\bf 15} (1985), no. 3, 421--432
\bibitem{circle}
D. Calegari and A. Walker
\emph{Circle actions on the boundary of Schottky space},
in preparation.
\bibitem{Calegari_Koch_Walker}
	D. Calegari, S. Koch and A. Walker,
	\emph{Roots, Schottky semigroups, and a proof of Bandt's Conjecture},
	Ergodic Theory and Dynamical Systems {\bf 37} (2017) no. 8, 2487--2555. 
	doi:10.1017/etds.2016.17	
	
\bibitem{Himeki}
Himeki, Y. and Ishii, Y. \emph{${\mathcal{M}}_{4}$ is regular-closed}, Ergodic Theory and Dynamical Systems, 1-8 doi:10.1017/etds.2018.27
\end{thebibliography}
\end{document}